\newtheorem{thm}{Theorem}[section]
\newtheorem{cor}[thm]{Corollary}
\newtheorem{lem}[thm]{Lemma}
\newtheorem{prop}[thm]{Proposition}
\newtheorem{ex}[thm]{Example}
\newtheorem{defn}[thm]{Definition}
\newtheorem{notation}[thm]{Notation}
\newcommand{\R}{\mathbb{R}}
\newcommand{\Real}{\mathbb{R}}
\newcommand{\B}{\mathbf{B}}
\begin{document}

\title[Global Inversion of Nonsmooth Mappings]{Global Inversion of Nonsmooth Mappings Using Pseudo-Jacobian Matrices}

\author[Jes\'us A. Jaramillo, \'Oscar  Madiedo and Luis S\'anchez-Gonz\'alez]{Jes\'us A. Jaramillo$^\dag$, \'Oscar  Madiedo$^\dag$ and Luis S\'anchez-Gonz\'alez$^\ddag$}

\address{$^\dag$ Departamento de An{\'a}lisis Matem{\'a}tico\\ Facultad de
Matem{\'a}ticas\\ Universidad Complutense de Madrid\\ 28040 Madrid, Spain}

\email{jaramil@mat.ucm.es, oscar.reynaldo@mat.ucm.es}

\address{$^\ddag$ Departamento de Ingenier{\'i}a Matem{\'a}tica\\ Facultad de CC. F{\'i}sicas y  Matem{\'a}ticas\\ Universidad de Concepci{\'o}n\\ Casilla 160-C, Concepci{\'o}n, Chile}

\email{lsanchez@ing-mat.udec.cl}

\thanks{Research supported in part by MICINN Project MTM2012-34341 (Spain). O. Madiedo is also supported by grant MICINN {BES2010-031192}.  L. S\'anchez-Gonz\'alez has also been  partially  supported by the FONDECYT project 11130354 (Chile).}

\keywords{Global inversion, Pseudo-Jacobian matrices.}
\subjclass[2000]{49J52, 49J53}


\date{November 2013}                                           

\maketitle

\begin{abstract}
We study the global inversion of a continuous nonsmooth mapping   $f:{\mathbb R^n} \to {\mathbb R^n}$, which may be non-locally Lipschitz.  To this end, we use the notion of pseudo-Jacobian map associated to  $f$, introduced by Jeyakumar and Luc, and we consider a related index of regularity for $f$. We obtain a characterization of global inversion in terms of  its index of regularity. Furthermore, we prove that the Hadamard integral condition has a natural counterpart in this setting,  providing a sufficient condition for global invertibility.

\end{abstract}

\section{Introduction}
Global inversion of mappings is a relevant issue in nonlinear analysis. In the case that  $f:{\mathbb R^n} \to {\mathbb R^n}$ is a $C^1$-smooth  mapping with everywhere nonzero Jacobian, the problem of global invertibility of $f$  was first considered by Hadamard \cite{Hadamard},  who obtained a sufficient condition in terms of the growth of the norm of the inverse of the derivative  $df(x)$, by means of his celebrated  {\it integral condition}. Namely, he proved that $f$ is  a global diffeomorphism provided
$$
\int _0^\infty \inf_{\vert\vert x \vert\vert = t}\left\|df(x)^{-1}\right\|^{-1} \, dt=\infty.
$$

This result has found a wide number of extensions and variants  in different contexts. In this way, an extension of Hadamard integral condition to the case of local diffeomorphisms between Banach spaces was given by Plastock in \cite{Plastock}.  The global invertibility of local diffeomorphisms between Banach-Finsler manifolds was studied by  Rabier in \cite{Rabier}. In \cite{John},  John obtained a variant of  Hadamard integral condition for a local homeomorphism $f$ between Banach spaces, using  the lower scalar Dini derivative of $f$. More recently, the problem of global inversion of a local homeomorphism between metric spaces has been considered in \cite{GutuJaramillo} and \cite{GaGuJa}, where some versions of Hadamard integral condition are obtained in terms of an analogue of lower scalar Dini derivative.

In the nonsmooth setting,  Pourciau studied in \cite{Pourciau1982} and \cite{Pourciau1988} the global inversion of locally Lipschitz mappings $f: \mathbb R^n \to \mathbb R^n$ using the Clarke generalized differential, and he also obtained a variant of Hadamard integral condition in this context. The global invertibility of locally Lipschitz mappings between (finite-dimensional) Finsler manifolds, using an analogue of the Clarke generalized differential, has been studied in \cite{JaMaSa}.

\smallskip

Our main purpose in this paper is to use techniques of nonsmooth analysis in order to study the global inversion of a continuous mapping $f: \mathbb R^n \to \mathbb R^n$, which may be non-locally Lipschitz, and for which the Clarke generalized differential is not necessarily  defined. To this end, we will use the concept of {\it pseudo-Jacobian} (also called {\it approximate Jacobian}) of the mapping $f$, introduced by Jeyakumar and Luc in \cite{JL0} and  studied later in \cite{JL1, Luc, JL}. In particular, we will see that the Hadamard integral condition has a natural counterpart in this setting, and provides a sufficient condition for global invertibility.

\smallskip

The paper is organized as follows. In Section 2 we include some basic definitions and preliminary results  which will be useful throughout the paper. In Section 3, for a mapping $f: \mathbb R^n \to \mathbb R^n$ we introduce the regularity index of $f$ related to a pseudo-Jacobian mapping $Jf$,  and we see its connection with  the lower scalar  Dini derivative of $f$. In Section 4 we prove our main results,  Theorem \ref{TFIG} gives a characterization of global inversion of a mapping $f: \mathbb R^n \to \mathbb R^n$ in terms of the regularity index of  $f$. In Corollary \ref{estimate}, we obtain an integral estimation of the domain of invertibility of $f$ around a point. Finally,  Corollary \ref{Hada} gives a version of Hadamard integral condition using the regularity  index, which provides a sufficient condition for global inversion.

\section{Preliminaries}

Let us begin by recalling the definition of pseudo-Jacobian associated to a continuous mapping, as well as some basic properties. This notion was introduced in \cite{JL0}, where it was called approximate Jacobian. We refer to the book \cite{JL} for an extensive information about this concept and its applications. The notation we use is standard. The $n$-dimensional Euclidean space is denoted by $\R^n$, and $\mathcal{L}(\Real^n,\Real^m)$ denotes the space of all linear mappings from $\Real^n$ to $\Real^m$, which can be regarded as  $m\times n$-matrices. The space $\mathcal{L}(\Real^n,\Real^m)$ is endowed with its usual matrix norm. The open unit ball of $\R^n$ and $\mathcal{L}(\Real^n,\Real^m)$ are denoted, respectively, by $\B_n$ and $\B_{m\times n}$.

\begin{defn}\label{pseudo}
Let $f: \R^n \to \R^m$ be a continuous mapping. We say that a nonempty closed
set of $m\times n$ matrices $Jf(x)\subset \mathcal{L}(\R^n, \R^m)$ is a pseudo-Jacobian
of $f$ at $x$ if for every $u \in \R^n$ and $v\in \R^m$ one has
\begin{center}
$(vf)^+(x;u) \leq \sup\limits_{M \in Jf(x)}\langle v, M u \rangle$,
\end{center}
where $vf$ is the real function $(vf)(x) = \sum\limits_{i=1}^{m}v_i f_i (x)$ for every
$x\in \R^n$, ($v_i$ being components of $v$ and $f_i$ being components of $f$),
and $(vf)^{+}(x;u)$ is the upper Dini directional derivative of the function $vf$ at
$x$ in the direction $u$, that is
$$
(vf)^{+}(x;u) :=\limsup\limits_{t  \to 0^{+}}\frac{(vf)(x + tu)-(vf)(x)}{t}.
$$
Each element of $Jf(x)$ is called a {pseudo-Jacobian matrix} of $f$ at $x$. If for every $x\in \mathbb R^n$ we have that $Jf(x)$ is a pseudo-Jacobian of $f$ at $x$, we say that  the set-valued map $Jf: \R^n \rightrightarrows \mathcal{L}(\R^n, \R^m)$ given by $Jf:x\mapsto Jf(x)$ is  a pseudo-Jacobian map for $f$.
\end{defn}

If $f: \R^n \to \R^m$ is continuous and G\^{a}teaux-differentiable at $x$ with derivative $df(x)\in \mathcal{L}(\R^n, \R^m)$, then, of course, $Jf(x):=\{df(x)\}$ is a pseudo-Jacobian of $f$ at $x$. Moreover, as can be seen in \cite[Section 1.3]{JL}, many generalized derivatives frequently used in nonsmooth analysis are examples of pseudo-Jacobians. In particular, this is the case of Clarke generalized Jacobian of a locally Lipschitz mapping. Nevertheless, there are  examples  of  locally Lipschitz functions whose Clarke generalized Jacobian strictly contains a pseudo-Jacobian. Consider, for example,  the following illustrative  case taken from \cite{JL}. A pseudo-Jacobian of the function  $f:\R^2\to\R^2$ defined by $f(x,y) = (|x|,|y|)$ at $(0, 0)$ is the set
$$
Jf(0, 0) = \left\{ \left(
											   \begin{matrix} 
											      1 & 0 \\
											      0 & 1 \\
											   \end{matrix}\right),
											  \left(
											   \begin{matrix} 
											      1 & 0 \\
											      0 & -1 \\
											   \end{matrix}\right),
											   \left(
											   \begin{matrix} 
											      -1 & 0 \\
											      0 & 1 \\
											   \end{matrix}\right),
											  \left(
											   \begin{matrix} 
											      -1 & 0 \\
											      0 & -1 \\
											   \end{matrix}\right)\right\}.
$$
Whereas the {Clarke generalized
Jacobian} is given by
$$
\partial^{C}f(0, 0) = \left\{\left(
												\begin{matrix} 
											      \alpha & 0 \\
											      0 & \beta \\
											   \end{matrix}
											\right): \alpha,\beta \in [-1,1]\right\},
$$
which is also a pseudo-Jacobian of $f$ at $(0, 0)$ and contains $Jf(0,0)$, in fact,  $\partial^{C}f(0,0)$ is the convex hull of $Jf(0,0)$. However, this is not always the case (see \cite[Example 1.3.3]{JL}).

\smallskip

We will  need the following Mean Value Theorem concerning  pseudo-Jacobians,  whose proof can be seen in   \cite[Theorem 2.2.2]{JL}.

\begin{thm} \label{properties:pseudo} Let $f:\R^n \to \R^m$ be a continuous mapping and  $u, v \in \R^n$.  Suppose that $Jf(x)$ is a pseudo-Jacobian of $f$ at $x$ for each $x$ in the segment $[u,v]$. Then
$$f(u) - f(v) \in \overline{co}(Jf([u, v])(u-v)).$$

\end{thm}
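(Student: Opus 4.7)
The plan is to reduce the vector-valued conclusion to a one-variable mean value inequality by means of a Hahn--Banach separation argument. Suppose, for contradiction, that $p := f(u)-f(v)$ fails to belong to the closed convex set $C := \overline{co}(Jf([u,v])(u-v)) \subset \R^m$. Since $C$ is closed and convex, finite-dimensional strict separation produces a vector $y \in \R^m$ and a number $\alpha \in \R$ such that $\langle y, c\rangle \leq \alpha < \langle y, p\rangle$ for every $c \in C$. In particular, $\langle y, M(u-v)\rangle \leq \alpha$ for every $x\in[u,v]$ and every $M\in Jf(x)$, so the defining inequality of a pseudo-Jacobian, applied to the test vector $y$ and direction $u-v$, yields
$$(yf)^{+}(x; u-v) \;\leq\; \sup_{M\in Jf(x)} \langle y, M(u-v)\rangle \;\leq\; \alpha \qquad \text{for all } x\in[u,v].$$

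Next I would transport this pointwise bound onto the segment by introducing the scalar function $\varphi:[0,1]\to\R$ given by $\varphi(t) = \langle y, f(v + t(u-v))\rangle$. It is continuous on $[0,1]$, and a direct change of variables in the $\limsup$ shows that, writing $x_t := v + t(u-v)$, the upper right Dini derivative of $\varphi$ at $t\in[0,1)$ equals $(yf)^{+}(x_t; u-v)$. The bound above therefore reads $\varphi^{+}(t) \leq \alpha$ on $[0,1)$, while by construction
$$\varphi(1) - \varphi(0) \;=\; \langle y, p\rangle \;>\; \alpha.$$

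I would then invoke the classical one-dimensional Dini mean value inequality: a continuous function $\varphi:[0,1]\to\R$ whose upper right Dini derivative is bounded above by a constant $K$ on $[0,1)$ satisfies $\varphi(1)-\varphi(0) \leq K$. Applied with $K=\alpha$, this contradicts the strict inequality above, so the assumption $p\notin C$ is untenable and the desired inclusion $f(u)-f(v) \in C$ follows.

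The main subtlety in the argument is not conceptual but technical. One must verify that the chain-rule identity $\varphi^{+}(t) = (yf)^{+}(x_t; u-v)$ actually holds for the one-sided $\limsup$; this is immediate since the parametrization $t \mapsto x_t$ is affine and we only approach from the right. One also needs the scalar Dini mean value inequality, which is standard and may be justified by a closed-set / open-to-the-right continuation argument: for fixed $\varepsilon>0$ the set $\{t\in[0,1]: \varphi(t)-\varphi(0) \leq (\alpha+\varepsilon)t\}$ is closed, contains $0$, and is stable under small positive increments (because $\varphi^{+}<\alpha+\varepsilon$ at each of its points), so it coincides with $[0,1]$; letting $\varepsilon\to 0^{+}$ delivers the required bound.
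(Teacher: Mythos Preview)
The paper does not supply its own proof of this statement; it merely cites \cite[Theorem~2.2.2]{JL}. Your argument is correct and is in fact the standard route to such vector-valued mean value inclusions: assume the point lies outside the closed convex set, strictly separate in $\R^m$, use the separating functional $y$ together with the defining inequality of a pseudo-Jacobian to bound the upper right Dini derivative of the scalar function $t\mapsto \langle y, f(v+t(u-v))\rangle$, and conclude via the one-variable Dini mean value inequality. This is precisely the scheme used in the cited reference, so your proposal aligns with the source the paper defers to.
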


\smallskip

Throughout the paper,  we will be interested in  pseudo-Jacobians with nice stability properties, in the sense that they are upper semicontinuous.

\begin{defn}
Let $F: \R^n \rightrightarrows \R^m$ be a set-valued map. We say that F is upper
semicontinuous (usc) at x if for every $\varepsilon > 0$, there exists some $\delta > 0$
such that
$$F(x + \delta \B_{n}) \subseteq F(x) + \varepsilon \B_{m}.$$
\end{defn}

It is well-known that the Clarke generalized Jacobian of a locally Lipschitz  mapping $f: \R^n \to \R^m$ is  usc (see \cite[Proposition 2.6.2]{Clarkebook}). Recall that a set-valued map $F: \R^n \rightrightarrows \mathcal{L}(\R^n, \R^m)$ is said to be $\emph{locally bounded}$ at $x$ if there exist a neighborhood $U$ of $x$ and a constant $\alpha>0$ such that $\|A\| \leq \alpha$ for each $A \in F(U)$. Clearly, if $F$ is usc at $x$ and  $F(x)$ is bounded,
then $F$ is locally bounded at $x$. On the other hand,  it is  proved in \cite[Proposition 2.2.8]{JL} that a continuous mapping $f: \R^n \to \R^m$  has a locally bounded pseudo-Jacobian map at $x$ if,  and only if, $f$ is locally Lipschitz at $x$.

\smallskip

Finally, let us recall the definition of the lower and upper scalar Dini
derivatives in the setting of continuous mappings between Banach spaces. These quantities
were used by John \cite{John} to study local homeomorphisms between Banach spaces,
in order to obtain a version of Hadamard integral condition. Later on, this kind of
scalar derivatives have been considered in \cite{GutuJaramillo} and \cite{GaGuJa}
in a metric space setting.

\begin{defn}
Let $E$ and $F$ be Banach spaces,  $U\subset E$ an  open set and $f:U \to F$  a continuous mapping.  The lower and upper scalar derivatives of $f$ at a point
$x\in U$  are defined respectively as
\begin{equation*}
D^{-}_x f =\liminf_{y\to x} \frac{\|f(y) - f(x)\|}{\|y - x\|}, \quad D^{+}_x f =\limsup_{y\to x} \frac{\|f(y) - f(x)\|}{\|y - x\|}
\end{equation*}
where $y\in U$ and $y\neq x$.
\end{defn}

We will also need the following Mean Value inequality given in  \cite[Proposition 3.9]{GutuJaramillo}. Firstly, recall that if $p: [a,b]\to E$ is a continuous path in a Banach space $E$, the {\it length} of $p$ is defined by
$$
\ell(p) = \sup \sum_{i=0}^{n-1}\| p(t_{i+1})- p(t_i)\|,
$$
where the supremum is taking over all partitions $a=t_0\leq t_1\leq\cdots\leq t_n=b$. The path $p$ is said to be {\it rectifiable} if $\ell(p)<\infty$.

\begin{prop}\label{MVI}
Let $E$ and $F$ be Banach spaces,   $U\subset E$ an open set and  $f:U \to F$  a continuous mapping. Suppose that  $q: [a,b]\to U$ is a continuous
path such that $p = f\circ q: [a,b]\to F$ is rectifiable and $q(a)\neq q(b)$. Then there exists $\tau \in [a,b]$ such that
$$
\ell (p) \geq D_{q(\tau)}^- f\cdot \|q(b)-q(a)\|.
$$
\end{prop}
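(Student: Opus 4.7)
The plan is to proceed by contradiction. Suppose that for every $\tau \in [a,b]$ one has $\ell(p) < D^-_{q(\tau)} f \cdot \|q(b) - q(a)\|$, and set $\mu := \ell(p) / \|q(b) - q(a)\|$, so that $D^-_{q(\tau)} f > \mu$ for every $\tau \in [a,b]$. The aim is to show that this forces the path $q$ to be constant on $[a,b]$, contradicting $q(a) \neq q(b)$. To that end I would introduce the length function $L(t) := \ell(p|_{[a,t]})$, which is continuous on $[a,b]$ because $p$ is rectifiable, and the auxiliary function
\[
\phi(t) := L(t) - \mu\, \|q(t) - q(a)\|.
\]
Then $\phi$ is continuous on $[a,b]$ and, by the very choice of $\mu$, satisfies $\phi(a) = \phi(b) = 0$.

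The key one-sided estimate comes from the hypothesis $D^-_{q(\tau)} f > \mu$ at each point $\tau$. For each such $\tau$ I would choose $\lambda_\tau$ with $\mu < \lambda_\tau < D^-_{q(\tau)} f$ and, by the definition of $D^-$, a $\delta_\tau > 0$ such that $\|f(y) - f(q(\tau))\| \geq \lambda_\tau \|y - q(\tau)\|$ whenever $0 < \|y - q(\tau)\| < \delta_\tau$. Continuity of $q$ then yields $\eta_\tau > 0$ with $\|q(s) - q(\tau)\| < \delta_\tau$ for $s \in [\tau, \tau + \eta_\tau] \cap [a,b]$. Using that $L(s) - L(\tau) \geq \|p(s) - p(\tau)\| \geq \lambda_\tau \|q(s) - q(\tau)\|$ (which also holds trivially when $q(s) = q(\tau)$) together with $\|q(s) - q(a)\| \leq \|q(\tau) - q(a)\| + \|q(s) - q(\tau)\|$, one obtains
\[
\phi(s) - \phi(\tau) \geq (\lambda_\tau - \mu)\, \|q(s) - q(\tau)\| \geq 0.
\]

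The final step is a continuous-induction argument that upgrades this right-local non-decreasing property of $\phi$ to global monotonicity: for any $s_0 < s_1$ in $[a,b]$, the set $T := \{t \in [s_0, s_1] : \phi(t) \geq \phi(s_0)\}$ is closed and contains $s_0$, and the local estimate at $\tau = \sup T$ rules out $\sup T < s_1$. Hence $\phi$ is non-decreasing on $[a,b]$, and combined with $\phi(a) = \phi(b) = 0$ this forces $\phi \equiv 0$. Feeding $\phi \equiv 0$ back into the local estimate gives $(\lambda_\tau - \mu)\|q(s) - q(\tau)\| \leq 0$ for every $\tau$ and every $s \in (\tau, \tau + \eta_\tau]$; since $\lambda_\tau > \mu$, this forces $q$ to be locally constant on the right at every point of $[a,b]$, hence constant by connectedness, contradicting $q(a) \neq q(b)$. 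The main obstacle is precisely this bootstrap: the strict gap $\lambda_\tau > \mu$ produces the improvement in $\phi$, and only after upgrading to global monotonicity and exploiting the equality $\phi(a) = \phi(b)$ is the strict improvement converted into the rigidity $q \equiv q(a)$.
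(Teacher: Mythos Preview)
The paper does not supply its own proof of this proposition; it is quoted verbatim as a preliminary tool from \cite[Proposition 3.9]{GutuJaramillo}. So there is no in-paper argument to compare against.

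Your argument is correct and self-contained. The contradiction scheme, the auxiliary function $\phi(t)=L(t)-\mu\|q(t)-q(a)\|$ with $\phi(a)=\phi(b)=0$, the one-sided increment estimate $\phi(s)-\phi(\tau)\ge(\lambda_\tau-\mu)\|q(s)-q(\tau)\|$, and the continuous-induction upgrade to global monotonicity all go through as you describe. Two small points you may want to make explicit if you write this up formally: first, the continuity of the arc-length function $L(t)=\ell(p|_{[a,t]})$ for a continuous rectifiable path is a standard fact, but it does require a short argument (monotonicity plus an approximation of the total length by a partition sum to rule out jumps); second, the closing step ``locally constant on the right at every point, hence constant by connectedness'' is really a second invocation of the same $\sup$-of-a-closed-set continuous-induction you already used for $\phi$, so it costs nothing to spell it out. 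Neither of these is a gap---the proof is sound.
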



\section{Regularity index and lower scalar derivative}

In this section we study the equi-invertibility of pseudo-Jacobian matrices,  and how it is related to the lower scalar derivative. Let us recall that, according to  Pourciau \cite{Pourciau1988}, the co-norm of a matrix  $A\in \mathcal{L}(\Real^n,\Real^n)$ is defined as
\begin{equation*}
/\hspace{-0.1cm}/A/\hspace{-0.1cm}/ = \inf_{||u||=1}||Au||.
\end{equation*}
It is clear that the matrix $A$ is invertible if, and only if, $/\hspace{-0.1cm}/A/\hspace{-0.1cm}/>0$. A subset of matrices $\mathcal A \subset \mathcal{L}(\Real^n,\Real^n)$ is said to be  \emph{equi-invertible} provided
$$\inf \{/\hspace{-0.1cm}/A/\hspace{-0.1cm}/ \, : \, A \in \mathcal A \}>0.$$

\smallskip

The concept of regularity for pseudo-Jacobian maps is defined as follows.

\begin{defn}
Let $f: \R^n \to \R^n$ be a continuous mapping, and let $Jf: \R^n \rightrightarrows \mathcal{L}(\R^n, \R^n)$ be a pseudo-Jacobian map of $f$. The associated regularity index $\alpha_{Jf}(x)$ of $f$ at $x\in\R^n$ is defined by
$$
\alpha_{Jf}(x):=
\inf\{/\hspace{-0.9mm}/A/\hspace{-0.9mm}/: A \in co(Jf(x))\}.
$$
We say that $f$ is $Jf$-regular at $x$ if   $\alpha_{Jf}(x) >0$.  When $f$ is $Jf$-regular at $x$ for every $x\in \Real^n$, we say that $f$ is $Jf$-regular.
\end{defn}

The connection of the regularity index with the original Hadamard integral condition can be seen as follows. Let $f: \R^n \to \R^n$ be a a $C^1$-smooth  mapping with everywhere nonzero Jacobian and consider the natural pseudo-Jacobian of $Jf$ of $f$ given by $Jf(x):=\{df(x)\}$. Then it is easy to see that
$$
\alpha_{Jf}(x) = \left\|df(x)^{-1}\right\|^{-1}.
$$

\smallskip

In order to analyze the behavior of the regularity index, we shall use the following notation:

\begin{notation}
{\rm Let $f: \R^n \to \R^n$ be a continuous mapping and let  $Jf: \R^n \rightrightarrows \mathcal{L}(\R^n, \R^n)$ be a pseudo-Jacobian map of  $f$. For each $\beta>0$, we denote:}
$$
\alpha_{Jf}(x,\beta):= \inf\{/\hspace{-0.9mm}/A/
\hspace{-0.9mm}/: A \in co(Jf(x + \beta {\B}_n))\}.
$$
\end{notation}

We will need the following simple, but useful, lemmas.

\begin{lem}\label{conbeta}
Let $f: \R^n \to \R^n$ be a continuous mapping and $Jf$  a pseudo-Jacobian map of $f$. If $Jf$ is usc at a point $x\in \R^n$, then
$$
\lim\limits_{\beta\to 0^{+}}\alpha_{Jf}(x,\beta) =
\sup\limits_{\beta>0}\alpha_{Jf}(x,\beta) = \alpha_{Jf}(x).
$$
\end{lem}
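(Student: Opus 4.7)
The plan is to split the statement into three parts: the monotonicity giving $\lim = \sup$, the easy inequality $\sup \leq \alpha_{Jf}(x)$, and the harder reverse inequality, for which upper semicontinuity is the only nontrivial ingredient.

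First, I would observe that the map $\beta \mapsto \alpha_{Jf}(x,\beta)$ is nonincreasing: if $0 < \beta_1 \leq \beta_2$, then $Jf(x+\beta_1\B_n) \subseteq Jf(x+\beta_2\B_n)$, so $co(Jf(x+\beta_1\B_n)) \subseteq co(Jf(x+\beta_2\B_n))$, hence taking infima of $/\hspace{-0.9mm}/\cdot/\hspace{-0.9mm}/$ gives $\alpha_{Jf}(x,\beta_1) \geq \alpha_{Jf}(x,\beta_2)$. Monotonicity immediately yields $\lim_{\beta\to 0^+}\alpha_{Jf}(x,\beta) = \sup_{\beta>0}\alpha_{Jf}(x,\beta)$. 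Since $0\in\B_n$, we have $x\in x+\beta\B_n$, so $Jf(x)\subseteq Jf(x+\beta\B_n)$ and therefore $\alpha_{Jf}(x,\beta)\leq\alpha_{Jf}(x)$ for every $\beta>0$, giving the easy inequality $\sup_{\beta>0}\alpha_{Jf}(x,\beta)\leq\alpha_{Jf}(x)$.

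For the reverse inequality, fix $\varepsilon>0$. By upper semicontinuity of $Jf$ at $x$, choose $\delta>0$ so that $Jf(x+\delta\B_n)\subseteq Jf(x) + \varepsilon\B_{n\times n}$. I would then use two elementary facts. First, for any subsets $A,B$ of a linear space, $co(A+B) = co(A) + co(B)$, and since $\B_{n\times n}$ is already convex, this yields
$$co(Jf(x+\delta\B_n)) \subseteq co(Jf(x)) + \varepsilon\B_{n\times n}.$$
Second, the co-norm is $1$-Lipschitz: for any matrices $A,B$ and unit vector $u$, $\|Au\| \geq \|Bu\| - \|A-B\|$, and taking the infimum over $\|u\|=1$ gives $/\hspace{-0.9mm}/A/\hspace{-0.9mm}/ \geq /\hspace{-0.9mm}/B/\hspace{-0.9mm}/ - \|A-B\|$.

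Combining these, for each $A\in co(Jf(x+\delta\B_n))$ there exists $B\in co(Jf(x))$ with $\|A-B\|\leq\varepsilon$, whence
$$/\hspace{-0.9mm}/A/\hspace{-0.9mm}/ \;\geq\; /\hspace{-0.9mm}/B/\hspace{-0.9mm}/ - \varepsilon \;\geq\; \alpha_{Jf}(x) - \varepsilon.$$
Taking the infimum over such $A$ gives $\alpha_{Jf}(x,\delta)\geq\alpha_{Jf}(x)-\varepsilon$, so $\sup_{\beta>0}\alpha_{Jf}(x,\beta)\geq\alpha_{Jf}(x)-\varepsilon$. Letting $\varepsilon\to 0^+$ closes the argument.

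The main technical point to verify carefully is the identity $co(A+B) = co(A) + co(B)$ (or at least the inclusion needed here), because this is what transfers the usc estimate from $Jf$ itself to the convex hulls used in the definition of the regularity index; everything else reduces to monotonicity and the Lipschitz property of the co-norm, both of which are routine.
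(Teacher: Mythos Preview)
Your proof is correct and follows essentially the same route as the paper: monotonicity in $\beta$ gives $\lim=\sup$ and the easy bound $\alpha_{Jf}(x,\beta)\leq\alpha_{Jf}(x)$, while upper semicontinuity yields $co(Jf(x+\delta\B_n))\subseteq co(Jf(x))+\varepsilon\B_{n\times n}$ and then the $1$-Lipschitz property of the co-norm finishes the reverse inequality. The paper's argument is identical in structure, merely writing out the co-norm estimate pointwise rather than invoking its Lipschitz property by name.
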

\begin{proof}
First of all, notice that $\alpha_{Jf}(x) \geq \alpha_{Jf}(x,\beta_1) \geq \alpha_{Jf}(x,\beta_2)$ whenever  $0<\beta_1 <\beta_2$, so the first equality is clear. Now, since $Jf$ is usc at $x$,  given $\varepsilon >0$ there is some $\beta >0$ such that
$$
Jf(x +\beta {\B}_n) \subset Jf(x) +\varepsilon  {\B}_{n\times n}.
$$
Then,
$$
co(Jf(x +\beta {\B}_n)) \subset co(Jf(x) +\varepsilon {\B}_{n\times n}) \subset co(Jf(x)) + \varepsilon {\B}_{n\times n}.
$$
Therefore, for each $A\in co(Jf(x+\beta {\B}_n))$ there exist $\tilde{A}\in co(Jf(x))$ and $\tilde{B} \in {\B}_{n\times n}$ such that $A=\tilde{A}+ \varepsilon \tilde{B}$. As a consequence, for every $u\in \mathbb R^n$ with $\|u\| = 1$, we have that
$$
\|Au\| \geq \|\tilde{A}u\| - \varepsilon
\|u\|\geq \alpha_{Jf}(x) -\varepsilon, $$
and hence   $/\hspace{-0.9mm}/A/\hspace{-0.9mm}/\geq \alpha_{Jf}(x)-\varepsilon$. Then, it follows  that
$$
\alpha_{Jf}(x) \ge \alpha_{Jf}(x,\beta)  \ge \alpha_{Jf}(x)-\varepsilon,
$$
for all $\varepsilon>0$, and the second equality holds.
\end{proof}

\smallskip

\begin{lem}\label{beta}
Let $f: \R^n \to \R^n$ be a continuous mapping and let $Jf$ be a pseudo-Jacobian
map of $f$. If there are $x\in \Real^n$ and $\beta>0$ such that $\alpha_{Jf}(x,\beta)>0$, then
\begin{equation*}
\|f(x + h) - f(x)\| \geq  \alpha_{Jf}(x,\beta)\cdot \|h\| \quad \text{for all} \quad  \|h\|< \beta. \label{open function1}
\end{equation*}

\end{lem}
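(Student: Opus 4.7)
The plan is to reduce the statement to the Mean Value Theorem for pseudo-Jacobians (Theorem \ref{properties:pseudo}) applied to the segment from $x$ to $x+h$, and then exploit the definition of $\alpha_{Jf}(x,\beta)$ on the convex hull of $Jf$ over the ball $x + \beta \B_n$.

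First I would fix $h$ with $\|h\| < \beta$ and observe that the segment $[x, x+h]$ is entirely contained in the open ball $x + \beta \B_n$. Since $Jf$ is a pseudo-Jacobian map of $f$ at every point, Theorem \ref{properties:pseudo} applies and gives
$$
f(x+h) - f(x) \in \overline{co}\bigl( Jf([x,x+h])\,h \bigr).
$$
Every element in $Jf([x,x+h])$ lies in $Jf(x + \beta \B_n)$, so every vector of the form $\sum \lambda_i A_i h$ with $A_i \in Jf([x,x+h])$ and $\lambda_i \geq 0$, $\sum \lambda_i = 1$, can be rewritten as $\bigl(\sum \lambda_i A_i\bigr) h$ where $\sum \lambda_i A_i \in co(Jf(x + \beta \B_n))$.

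Next I would apply the definition of the co-norm together with the definition of $\alpha_{Jf}(x,\beta)$: for any $A \in co(Jf(x+\beta \B_n))$ one has $\|Ah\| \geq /\hspace{-0.9mm}/A/\hspace{-0.9mm}/ \cdot \|h\| \geq \alpha_{Jf}(x,\beta) \cdot \|h\|$. Hence every element of $co\bigl(Jf([x,x+h])\,h\bigr)$ has norm at least $\alpha_{Jf}(x,\beta)\|h\|$. Closing up under limits preserves this lower bound, so every element of $\overline{co}\bigl(Jf([x,x+h])\,h\bigr)$ has norm $\geq \alpha_{Jf}(x,\beta)\|h\|$, and in particular
$$
\|f(x+h) - f(x)\| \geq \alpha_{Jf}(x,\beta)\cdot \|h\|.
$$

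The only technical point to be careful with is the interplay between the convex hull and the multiplication by $h$: one has the identity $co(\mathcal A)\, h = co(\mathcal A\, h)$ for a set of matrices $\mathcal A$, which justifies passing from matrices in $co(Jf(x+\beta\B_n))$ to convex combinations of images $A_i h$. Once this identification is made, the rest is routine: the Mean Value Theorem supplies the containment and the co-norm estimate supplies the inequality, so there is no substantive obstacle beyond correctly applying Theorem \ref{properties:pseudo}.
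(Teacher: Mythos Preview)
Your proof is correct and follows essentially the same route as the paper: apply the Mean Value Theorem for pseudo-Jacobians on the segment $[x,x+h]\subset x+\beta\B_n$, use the identity $co(\mathcal A)h = co(\mathcal A\,h)$, and invoke the co-norm bound $\|Ah\|\geq \alpha_{Jf}(x,\beta)\|h\|$ for $A\in co(Jf(x+\beta\B_n))$. The only cosmetic difference is that the paper handles the closure $\overline{co}$ by an $\varepsilon$-enlargement (writing $\overline{co}(\cdot)\subset co(\cdot)+\varepsilon\|h\|\B_n$ and letting $\varepsilon\to 0$), whereas you pass the norm lower bound directly through the closure by continuity of the norm; both arguments are equivalent.
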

\begin{proof}
Choose $0<\|h\|< \beta$. By the Mean Value Theorem (Theorem \ref{properties:pseudo})  we have that, for any fixed $0<\varepsilon \leq \alpha_{Jf}(x,\beta)$,
$$
f(x + h) - f(x) \in \overline{co}(Jf([x,x+h])(h)) \subseteq \overline{co}(Jf(x + \beta {\B}_n)(h))
$$
$$
\subseteq co (Jf(x + \beta {\B}_n)(h)) + \varepsilon \|h\| {\B}_{n} = co ( Jf(x + \beta {\B}_n))(h) + \varepsilon \|h\| {\B}_{n}.
$$
Thus, there exist $A\in co(Jf(x+\beta {\B}_n))$ and  $v \in {\B}_{n}$ such that $f(x + h) - f(x) = Ah +
\varepsilon \|h\| v$.  Then
\begin{equation*}
\|f(x + h) - f(x)\| \geq \|Ah\| - \varepsilon \|h\|  \geq
(\alpha_{Jf}(x,\beta) - \varepsilon) \|h\|.
\end{equation*}
As a consequence, we obtain that $\|f(x + h) - f(x)\| \geq \alpha_{Jf}(x,\beta)\|h\|$.

\end{proof}

In the next result, which is a direct consequence of Lemmas  \ref{conbeta} and \ref{beta} above, we compare the lower scalar derivative of a continuous mapping $f$ to the regularity index of a pseudo-Jacobian of $f$. This relationship  will be one of the keys to obtain our global inversion results.

\begin{thm}\label{DerEsc1}
Let $f: \R^n \to \R^n$ be a continuous mapping and let $Jf$ be a pseudo-Jacobian map of $f$. If  $Jf$ is usc at $x \in \R^n$ and $f$ is $Jf$-regular at $x$, then
\begin{equation*}
D_{x}^{-}f \geq \alpha_{Jf}(x).
\end{equation*}
\end{thm}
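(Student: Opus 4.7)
The plan is to combine the two lemmas immediately preceding the statement, with Lemma \ref{beta} supplying a lower bound on difference quotients for fixed $\beta$ and Lemma \ref{conbeta} letting us sharpen this bound to $\alpha_{Jf}(x)$ as $\beta\to 0^+$.

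First I would fix $\beta > 0$ small enough that $\alpha_{Jf}(x,\beta) > 0$. Such a $\beta$ exists because $\alpha_{Jf}(x) > 0$ by the $Jf$-regularity hypothesis, and Lemma \ref{conbeta} (whose usc hypothesis is exactly what is assumed on $Jf$) says $\alpha_{Jf}(x,\beta) \to \alpha_{Jf}(x)$ as $\beta\to 0^+$. For any such $\beta$, Lemma \ref{beta} applies and yields
$$
\frac{\|f(x+h) - f(x)\|}{\|h\|} \geq \alpha_{Jf}(x,\beta) \quad \text{whenever } 0 < \|h\| < \beta.
$$

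Taking the $\liminf$ as $h \to 0$ (equivalently, as $y = x+h \to x$) gives $D_x^{-} f \geq \alpha_{Jf}(x,\beta)$. This inequality holds for all sufficiently small $\beta>0$, so I would then let $\beta \to 0^+$ and invoke the second equality in Lemma \ref{conbeta} to conclude
$$
D_x^{-} f \geq \lim_{\beta \to 0^+}\alpha_{Jf}(x,\beta) = \alpha_{Jf}(x).
$$

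There is essentially no obstacle here; the statement is a clean corollary of the two preceding lemmas. The only subtlety worth noting is that Lemma \ref{beta} requires $\alpha_{Jf}(x,\beta) > 0$ to be applied, which is why the upper semicontinuity of $Jf$ (via Lemma \ref{conbeta}) together with the regularity assumption $\alpha_{Jf}(x) > 0$ is genuinely used, rather than being a purely cosmetic hypothesis.
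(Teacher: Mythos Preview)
Your proposal is correct and matches the paper's own treatment: the paper states explicitly that the theorem is a direct consequence of Lemmas~\ref{conbeta} and~\ref{beta}, without giving further details, and your argument spells out precisely that combination.
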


\smallskip

As we have mentioned before, in the case of a continuous but  non-locally Lipschitz  mapping $f: \R^n \to \R^n$, we have that the pseudo-Jacobian map can be, in general, unbounded. In order to deal with this problem  a useful tool is the so-called the recession cone, which is defined as follows. Consider a subset  $\mathcal A \subset \mathcal{L}(\R^n,\R^n)$;  the $\emph{recession cone}$ (or \emph{asymptotic cone}) of  $\mathcal A$  is defined by
$$
\mathcal A_{\infty}:= \{\lim_{j\to \infty} t_j A_j \, :\, A_j \in \mathcal A, t_j \downarrow 0\}.
$$
We refer to \cite{JL} for further information about recession cones. Suppose now that $f: \R^n \to \R^n$ is continuous and that $Jf(x)$ is a pseudo-Jacobian of $f$ at $x$. The set $Jf(x)_{\infty}$ denotes the recession cone of
$Jf(x)$. The elements of $Jf(x)_{\infty}$ are called $\emph{recession matrices}$ of $Jf(x)$. Recall that from \cite[Proposition 3.1.6]{JL} we have that $f$ is $Jf$-regular whenever the set $\overline{co} (Jf(x)) \cup co(Jf(x)_{\infty} \backslash \{0\})$ is invertible. Thus, we can obtain the following direct consequence.

\begin{cor}\label{DerEsc}
Let $f: \R^n \to \R^n$ be a continuous mapping and let $Jf$ be a pseudo-Jacobian map of $f$. If $Jf$ is usc at $x \in \R^n$ and  each element of the set $\overline{co} (Jf(x)) \cup co(Jf(x)_{\infty} \backslash \{0\})$ is invertible, then
\begin{equation*}
D_{x}^{-}f \geq \alpha_{Jf}(x)>0.
\end{equation*}
\end{cor}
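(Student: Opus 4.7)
The plan is to obtain this corollary as a one-line deduction combining the invertibility criterion from \cite[Proposition 3.1.6]{JL} with Theorem \ref{DerEsc1} that was just established.

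First, I would use the hypothesis that every element of $\overline{co}(Jf(x)) \cup co(Jf(x)_{\infty} \setminus \{0\})$ is invertible. By \cite[Proposition 3.1.6]{JL}, which the authors have already cited just before the statement, this condition implies that $f$ is $Jf$-regular at $x$, i.e., $\alpha_{Jf}(x) > 0$. This takes care of the strict positivity in the conclusion without requiring any further work, and it is the only place the recession-cone hypothesis is actually used.

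Next, with $Jf$ usc at $x$ (by hypothesis) and $f$ now known to be $Jf$-regular at $x$, the assumptions of Theorem \ref{DerEsc1} are met. Applying that theorem directly gives $D_x^- f \geq \alpha_{Jf}(x)$. Concatenating with the previous paragraph yields $D_x^- f \geq \alpha_{Jf}(x) > 0$, which is exactly the claim.

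There is essentially no obstacle here: the work was done in Theorem \ref{DerEsc1} (whose proof, in turn, reduced via Lemmas \ref{conbeta} and \ref{beta} to the Mean Value Theorem for pseudo-Jacobians and the usc hypothesis). The only subtlety worth double-checking is that the notion of \emph{regularity} used by Jeyakumar--Luc in \cite[Proposition 3.1.6]{JL} genuinely coincides with the definition $\alpha_{Jf}(x) > 0$ adopted in this paper, so that positivity of $\alpha_{Jf}(x)$ is really what the cited invertibility of $\overline{co}(Jf(x)) \cup co(Jf(x)_{\infty}\setminus\{0\})$ delivers; once this identification is acknowledged, the corollary follows immediately with no further computation.
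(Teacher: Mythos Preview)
Your proposal is correct and follows exactly the same approach as the paper: the corollary is presented there as a ``direct consequence'' of \cite[Proposition 3.1.6]{JL} (which yields $\alpha_{Jf}(x)>0$) combined with Theorem~\ref{DerEsc1}. There is nothing to add.
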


\section{Global Inversion}

First of all, we consider the problem of local inversion. Let $f : \R^n\to\R^n$ be a continuous mapping and let $x_0 \in \R^n$ be given. We say that $f$ admits locally an inverse at $x_0$ if there exist neighborhoods $U$ of $x_0$ and $V$ of $f(x_0)$, and a continuous mapping $g : V \to \R^n$ such that $g(f(x)) = x$ and $f(g(y)) = y$ for every $x \in U$ and $y \in V$. Taking into account Lemmas \ref{conbeta} and \ref{beta},  the following result follows from  \cite[Corollary 5.2]{Luc}:

\begin{thm}\label{TFI}
Let $f: \R^n\to\R^n$ be a continuous mapping and let $Jf$ be a pseudo-Jacobian map of $f$. If $Jf$ is  usc at $x_0\in \R^n$ and $f$
is $Jf$-regular at $x_0$, then $f$ admits locally an inverse at $x_0$, which is  Lipschitz  at $f(x_0)$.
\end{thm}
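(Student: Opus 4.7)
The plan is to combine Lemmas \ref{conbeta} and \ref{beta} into a uniform bi-Lipschitz lower bound on a neighborhood of $x_0$, and then invoke Brouwer's invariance of domain to produce the continuous local inverse (this is essentially the route of Luc's Corollary 5.2 cited in the statement). The main obstacle is the passage from the pointwise regularity $\alpha_{Jf}(x_0) > 0$ to a uniform estimate valid on a whole ball around $x_0$; this is precisely what the upper semicontinuity of $Jf$, filtered through Lemma \ref{conbeta}, delivers.

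Concretely, since $f$ is $Jf$-regular at $x_0$ and $Jf$ is usc at $x_0$, Lemma \ref{conbeta} furnishes some $\beta_0 > 0$ with
$$
\alpha := \alpha_{Jf}(x_0, 2\beta_0) \geq \tfrac{1}{2}\alpha_{Jf}(x_0) > 0.
$$
For any two points $x, y$ in the open ball $B(x_0, \beta_0) = x_0 + \beta_0 \B_n$, convexity places the segment $[x, y]$ inside $x_0 + 2\beta_0\B_n$, so the Mean Value inclusion of Theorem \ref{properties:pseudo} gives
$$
f(y) - f(x) \in \overline{co}\bigl(Jf([x,y])(y-x)\bigr) \subset \overline{co}\bigl(Jf(x_0 + 2\beta_0 \B_n)(y-x)\bigr).
$$
Rerunning the $\varepsilon$-perturbation argument used in the proof of Lemma \ref{beta}, one obtains the uniform estimate
$$
\|f(y) - f(x)\| \geq \alpha \|y - x\|, \qquad x, y \in B(x_0, \beta_0),
$$
and in particular $f$ is injective on $U := B(x_0, \beta_0)$.

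Since $f|_U$ is continuous and injective between open subsets of $\R^n$, Brouwer's invariance of domain ensures that $V := f(U)$ is open, contains $f(x_0)$, and that $f|_U: U \to V$ is a homeomorphism. The continuous inverse $g: V \to U$ inherits from the preceding estimate the Lipschitz bound
$$
\|g(y') - g(y)\| \leq \frac{1}{\alpha}\|y' - y\|, \qquad y, y' \in V,
$$
so $g$ is Lipschitz on $V$ and in particular at $f(x_0)$, as required.
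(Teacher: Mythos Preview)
Your proof is correct and follows essentially the same route the paper points to: Lemmas \ref{conbeta} and \ref{beta} yield a uniform lower Lipschitz estimate on a ball around $x_0$, and then one appeals to invariance of domain, which is precisely the content of the cited \cite[Corollary 5.2]{Luc}. You have simply made self-contained what the paper leaves as a reference, including the small but necessary extension of Lemma~\ref{beta} from increments $f(x_0+h)-f(x_0)$ to arbitrary pairs $f(y)-f(x)$ with $x,y$ in the ball.
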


\smallskip


Now, let us give a characterization of global inversion of a  continuous mapping,  in terms of the regularity index of a pseudo-Jacobian map.

\begin{thm}\label{TFIG}
Let $f: \R^n\to\R^n$ be a continuous mapping and let $Jf$ be a pseudo-Jacobian map of $f$. Suppose that $Jf$ is  usc  and $f$ is $Jf$-regular on $\mathbb R^n$. The following conditions are equivalent:
\begin{itemize}
\item[(a)] $f: \R^n\to\R^n$ is a global homeomorphism.
\item[(b)] For each compact subset $K \subset \mathbb R^n$ there exists $\alpha_K>0$ such that $\alpha_{Jf}(x) > \alpha_K$ for every $x\in f^{-1}(K)$.
\end{itemize}
\end{thm}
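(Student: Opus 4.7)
For (a) $\Rightarrow$ (b), the strategy is to show that $\alpha_{Jf}$ is lower semicontinuous on $\R^n$ and then invoke compactness of $f^{-1}(K)$. Given $x_0 \in \R^n$ and $\epsilon > 0$, Lemma \ref{conbeta} provides $\beta > 0$ with $\alpha_{Jf}(x_0, \beta) > \alpha_{Jf}(x_0) - \epsilon$; for $\|y - x_0\| < \beta/2$ the inclusion $y + (\beta/2)\B_n \subset x_0 + \beta\B_n$ yields $\alpha_{Jf}(y) \geq \alpha_{Jf}(y, \beta/2) \geq \alpha_{Jf}(x_0, \beta) > \alpha_{Jf}(x_0) - \epsilon$, so $\alpha_{Jf}$ is lsc. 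If $f$ is a global homeomorphism, then $f^{-1}(K)$ is compact; the lsc positive function $\alpha_{Jf}$ attains a positive minimum there, which yields (b).

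For (b) $\Rightarrow$ (a), the plan is to construct a continuous right inverse $g$ of $f$ by lifting straight-line paths from a fixed basepoint. Theorem \ref{TFI} gives that $f$ is a local homeomorphism at every point. The main technical step is \emph{rectifiable-path lifting}: given rectifiable continuous $\sigma : [0,1] \to \R^n$ and any $x_0 \in f^{-1}(\sigma(0))$, there is a unique continuous lift $p : [0,1] \to \R^n$ with $f \circ p = \sigma$. I would prove this by the standard maximality argument: let $T = \sup\{t : \exists\text{ lift on } [0,t]\}$; if $T \leq 1$ and the lift exists only on $[0, T)$, apply Proposition \ref{MVI} to $p|_{[s,t]}$ with $0 \leq s < t < T$ to obtain $\ell(\sigma|_{[s,t]}) \geq D^-_{p(\tau)} f \cdot \|p(t) - p(s)\|$ for some $\tau \in [s,t]$. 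Since $p(\tau) \in f^{-1}(K)$ for $K := \sigma([0,1])$ compact, (b) combined with Theorem \ref{DerEsc1} gives $D^-_{p(\tau)} f \geq \alpha_{Jf}(p(\tau)) > \alpha_K$, hence $\|p(t) - p(s)\| \leq \ell(\sigma|_{[s,t]})/\alpha_K$; rectifiability of $\sigma$ forces $p$ to be Cauchy at $T^-$, and its limit $x^*$ satisfies $f(x^*) = \sigma(T)$, so Theorem \ref{TFI} at $x^*$ extends the lift past $T$, forcing $T = 1$.

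Now fix $x_0 \in \R^n$, $y_0 = f(x_0)$, and define $g(y) := p_y(1)$ where $p_y$ lifts $\sigma_y(t) = (1-t)y_0 + ty$ from $x_0$. Then $f \circ g = \mathrm{id}$ by construction. To show $g$ is continuous at a point $y$, use compactness of $C_y := p_y([0,1]) \subset f^{-1}(K_y)$ with $K_y := \sigma_y([0,1])$, upper semicontinuity of $Jf$, and Lemma \ref{conbeta} applied pointwise on $C_y$, together with a finite-covering/monotonicity argument, to extract a uniform $\tilde\beta > 0$ with $\alpha_{Jf}(u, \tilde\beta) > \alpha_{K_y}/2$ for every $u$ in a $\tilde\beta$-neighborhood of $C_y$. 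Lemma \ref{beta} then furnishes, for each $x \in C_y$, a continuous local inverse $\phi_x$ of $f$ on a uniform ball $f(x) + r\B_n$ (with $r = \alpha_{K_y}\tilde\beta/2$ obtained from the boundary estimate $\inf_{\|h\|=\tilde\beta}\|f(x+h)-f(x)\| \geq \alpha_{K_y}\tilde\beta/2$ together with invariance of domain), taking values in $x + \tilde\beta\B_n$ and Lipschitz with constant $2/\alpha_{K_y}$; moreover $\phi_x(z)$ depends jointly continuously on $(x,z)$. For $\|y' - y\| < r$, the map $q(t) := \phi_{p_y(t)}(\sigma_{y'}(t))$ is then a well-defined continuous lift of $\sigma_{y'}$ from $x_0$, so by uniqueness $q = p_{y'}$, yielding $\|g(y') - g(y)\| \leq (2/\alpha_{K_y})\|y' - y\|$.

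Finally, $f \circ g = \mathrm{id}$ makes $g$ a continuous injection $\R^n \to \R^n$, so by invariance of domain $g$ is an open map and $g(\R^n)$ is open. It is also closed: if $g(y_n) \to x$ then $y_n = f(g(y_n)) \to f(x)$, so setting $y := f(x)$ and using continuity of $g$ gives $g(y_n) \to g(y) = x$, hence $x \in g(\R^n)$. Connectedness of $\R^n$ forces $g(\R^n) = \R^n$, so $g$ is a continuous bijection and $f = g^{-1}$ is a global homeomorphism. I expect the main obstacle to be the uniform extraction of $\tilde\beta$ and the corresponding uniform local inverse construction, which require carefully combining the upper semicontinuity of $Jf$, the compactness of $C_y$, and a topological argument (e.g., degree theory or invariance of domain) to guarantee a uniform image-ball radius $r$ and the joint continuity of $\phi_x(z)$ in $(x,z)$.
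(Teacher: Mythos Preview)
Your argument for $(a)\Rightarrow(b)$ is essentially the paper's: the paper runs a contradiction argument that is exactly your lower-semicontinuity statement in disguise (using Lemma~\ref{conbeta} and compactness of $f^{-1}(K)$).

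For $(b)\Rightarrow(a)$ the core analytic step is the same in both proofs: combine Theorem~\ref{DerEsc1} with Proposition~\ref{MVI} and hypothesis~(b), applied with $K$ the image of a line segment, to show that any partial lift $q:[0,b)\to\R^n$ of that segment is Lipschitz in the parameter and hence has a limit at $b$. The paper simply recognizes this as Plastock's ``condition~(L)'' and invokes \cite[Theorem~1.2]{Plastock} to conclude that the local homeomorphism $f$ is global. You instead rebuild the global inverse by hand: you lift segments from a basepoint to define $g$, establish continuity of $g$ through a uniform local-inverse construction (compactness of $C_y$, Lemma~\ref{conbeta}, Lemma~\ref{beta}, invariance of domain), and finish with an open--closed argument on $g(\R^n)$. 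This is correct in outline, and the obstacles you flag (uniform $\tilde\beta$, uniform image-ball radius, joint continuity of $\phi_x(z)$) can all be handled with the tools you name, though your stated radius $r=\alpha_{K_y}\tilde\beta/2$ is slightly optimistic---the standard minimum-of-distance argument yields half of that. What your route buys is self-containment: you avoid the external reference to Plastock. What the paper's route buys is brevity and a clean separation of concerns, since Plastock's condition~(L) is precisely the path-continuation property you are re-deriving, and his theorem already encapsulates the covering-space reasoning (lifting, monodromy, simple connectedness of $\R^n$) that you reconstruct.
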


\begin{proof}
$(a)\Rightarrow (b)$ Suppose that $f: \R^n\to\R^n$ is a homeomorphism, and let $K\subset \mathbb R^n$ be a compact set. Then  $ f^{-1}(K)$ is also compact. Now consider
$$
\alpha_K = \inf \{\alpha_{Jf}(x) \, : \, x \in f^{-1}(K)\}.
$$
We claim that $\alpha_K>0$. Indeed, otherwise we can find a sequence $(x_j)$ in $f^{-1}(K)$ such that $\alpha_{Jf}(x_j)$ converges to zero. By compactness, we can assume that $(x_j)$ is convergent to some point $x$, and we know that $\alpha_{Jf}(x)>0$. By Lemma \ref{conbeta}, there is some $\beta>0$ such that $\alpha_{Jf}(x, \beta)> \frac{1}{2}\alpha_{Jf}(x)>0$. But $x_j$ belongs to the ball $x+\beta {\B}_n$ for $j$ large enough, which  is a contradiction.

$(b)\Rightarrow (a)$ By Theorem \ref{TFI} we know that $f$ is a local homeomorphism. Thus, according to  \cite[Theorem 1.2]{Plastock}, it is sufficient to prove that $f$ satisfies the following limiting condition (L):

\medskip

\noindent {\rm (L)} For every line segment $p:[0, 1] \to \mathbb R^n$,  given by $p(t)=(1-t)y_0+ty_1$ for some $y_0, y_1\in \mathbb R^n$,  for every
$0<b\leq 1$ and for every continuous path $q:[0, b)\to \mathbb R^n$ satisfying that $f(q(t))=p(t)$ for every $t\in [0, b)$, there exists a sequence $(t_j)$ in $[0, b)$ convergent to $b$ and such that the sequence $\{q(t_j)\}$ is convergent in $\mathbb R^n$.

\medskip

In order to verify condition (L), consider a line segment $p:[0, 1] \to \mathbb R^n$ given by $p(t)=(1-t)y_0+ty_1$ for some $y_0, y_1\in \mathbb R^n$, consider some $0<b\leq 1$, and let $q:[0, b)\to \mathbb R^n$ be a continuous path satisfying that $f(q(t))=p(t)$ for every $t\in [0, b)$. The set $K:= p([0, 1])$ is compact in $\mathbb R^n$ and $q([0, b))\subset f^{-1}(K)$, so, by hypothesis, there exists $\alpha_K>0$ such that $\alpha_{Jf}(x) > \alpha_K>0$  for every $x\in q([0, b))$. Using Theorem \ref{DerEsc1} we deduce that
$$
D^-_{x} f \geq \alpha_{Jf}(x) > \alpha_K>0,
$$
for every $x\in q([0, b))$. Now by   Proposition \ref{MVI} we have that, for every $s, t \in [0, b)$ with $s<t$:
$$
\ell(p_{|[s,t]}) \geq \inf \{D^-_{x} f \, : \ x\in q([s,t])\}\cdot \Vert q(s) - q(t) \Vert.
$$
Since   $\ell(p_{|[s,t]})= \vert s-t\vert \cdot \Vert y_0-y_1\Vert$,  we obtain that
$$
\vert s-t\vert \cdot \Vert y_0-y_1\Vert \geq \alpha_K \Vert q(s) - q(t) \Vert.
$$
Now consider any sequence $(t_j)$ in $[0, b)$ convergent to $b$. The above inequality gives that
$$
\Vert q(t_i) - q(t_j) \Vert \leq \frac{1}{\alpha_K} \vert t_i - t_j\vert \cdot \Vert y_0-y_1\Vert,
$$
for every $i, j$. This shows that the sequence $\{q(t_j)\}$ is Cauchy, and therefore convergent in $\mathbb R^n$.
\end{proof}

\smallskip

As a consequence of the previous result, we obtain at once the following result:

\begin{cor}\label{bound}
Let $f: \R^n\to\R^n$ be a continuous mapping and let $Jf$ be a pseudo-Jacobian map of $f$. Suppose that $Jf$ is  usc  and there exists some $\alpha>0$ such that $\alpha_{Jf}(x) \geq \alpha$ for every $x \in \mathbb R^n$. Then  $f: \R^n\to\R^n$ is a global homeomorphism.
\end{cor}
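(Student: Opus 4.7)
The plan is to derive Corollary \ref{bound} directly from Theorem \ref{TFIG} by checking its hypotheses one at a time. The upper semicontinuity of $Jf$ is assumed, and the uniform lower bound $\alpha_{Jf}(x) \geq \alpha > 0$ on all of $\mathbb R^n$ immediately yields that $f$ is $Jf$-regular at every point, so the standing hypotheses of Theorem \ref{TFIG} are met.

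It then remains to verify condition (b) of that theorem. For this I would let $K \subset \mathbb R^n$ be an arbitrary compact set and simply take $\alpha_K := \alpha/2$. Since $\alpha_{Jf}(x) \geq \alpha > \alpha/2 = \alpha_K$ for every $x \in \mathbb R^n$, the inequality holds in particular for every $x \in f^{-1}(K)$. By Theorem \ref{TFIG}, we conclude that $f$ is a global homeomorphism.

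There is no real obstacle to confront: the content of the corollary is precisely that the compact-by-compact lower bound needed in Theorem \ref{TFIG}(b) follows trivially from a uniform lower bound, so no analytic work beyond invoking the theorem is required.
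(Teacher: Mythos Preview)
Your proposal is correct and matches the paper's approach exactly: the paper simply states that the corollary follows ``at once'' from Theorem \ref{TFIG}, and your verification of condition (b) via the uniform bound $\alpha_K := \alpha/2$ is precisely the trivial check the authors leave implicit.
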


\smallskip
It is worth noticing that by \cite[Proposition 3.1.6]{JL} the $Jf$-regularity of a mapping $f$ is weaker than the condition that each element of the set $\overline{co} (Jf(x)) \cup co(Jf(x)_{\infty} \backslash \{0\})$ is invertible. In the following example, we prove that it is, in fact, a strictly weaker condition. Thus, Theorem \ref{TFI} is more general that the one given in \cite{JL1}.

\begin{ex}\label{example}
{\rm Consider the mapping $f: \R^2\to\R^2$  defined by
$$
f(x,y) = (x-y, \, x+3y^{1/3}).
$$

It is not difficult to check that $f$ has the following pseudo-Jacobian:

\begin{align*}
Jf(x,y) = & \left\{
												 \begin{pmatrix}
												 1 & -1\\
												 1 & y^{- 2/3}
												 \end{pmatrix}
														\right\}\quad \text{for}\,\, y\neq 0\text{ , \, \, and} \quad
Jf(x,0) = & \left\{
												 \begin{pmatrix}
												 1 & -1\\
												 1 & \beta
												 \end{pmatrix}:\, \beta \geq 0
														\right\}.											
\end{align*}

It is clear that $Jf$ is usc. On the other hand, for every $(x, y)\in \mathbb R^2$, every matrix of  $Jf(x,y)$ is invertible. Moreover, for each $\beta\ge 0$ we have that
\begin{align*}
A_\beta^{-1} = \begin{pmatrix}
												 1 & -1\\
												 1 & \beta
												 \end{pmatrix}^{-1}= & \frac{1}{\beta + 1}\begin{pmatrix}
												 \beta & 1\\
												 -1 & 1
												 \end{pmatrix}
												 \end{align*}
so that												
\begin{align*}											
\|A_\beta^{-1}\|= & \sup\limits_{\|(u,v)\| \leq 1}\left \|\frac{1}{\beta +1}
												\begin{pmatrix}
												 \beta & 1\\
												 -1 & 1
												 \end{pmatrix}\begin{pmatrix}
												  u\\
												  v
												 \end{pmatrix}\right\|	& \leq  \sqrt{2} \sup\limits_{\|(u,v)\|_{\infty} \leq 1}\left \|\frac{1}{\beta +1}
												\begin{pmatrix}
												 \beta & 1\\
												 -1 & 1
												 \end{pmatrix}\begin{pmatrix}
												  u\\
												  v
												 \end{pmatrix}\right\|_{\infty}
												 \end{align*}
$$
\leq  \sqrt{2} \max \left\{1, \frac{2}{\beta+1}\right\} \leq 2 \sqrt{2}.
$$
Thus,  $/\hspace{-0.1cm}/A_\beta/\hspace{-0.1cm}/ = ||A_\beta^{-1}||^{-1} \ge \frac{1}{2 \sqrt{2}}$ for all $\beta\ge 0$. Therefore,  $\alpha_{Jf}(x,y)\geq \frac{1}{2 \sqrt{2}}$ for every $(x, y)\in \mathbb R^2$, so
$f$ is $Jf$-regular. Thus, as a consequence of Corollary \ref{bound} we obtain that $f: \R^2\to\R^2$ is a global homeomorphism. Nevertheless,  the recession cone of the set $co(Jf(0,0))$ is given by
\begin{align*}
(Jf(0,0))_{\infty} = & \left\{
												 \begin{pmatrix}
												 0 & 0\\
												 0 & \beta
												 \end{pmatrix}:\, \beta \geq 0
														\right\},													
\end{align*}
whose matrices are  not invertible.}
\end{ex}

\smallskip

In the last part of the paper we are going to obtain variants of the Hadamard integral condition in terms of the lower scalar Dini derivative and of the regularity index. First, we  need to recall the following concept from \cite{John}. Suppose that  $E$ and $F$ are Banach spaces, $D\subset E$ is an open set and  $f:D\to F$ is a local homeomorphism. Then for each $x\in D$ there is a neighborhood $S_x$ of $f(x)$ in $F$ such that $f$ has a local  inverse $f_x^{-1}$ in $S_x$. Moreover, as it was proved in \cite{John},  $S_x$ can be chosen as to be the so-called \emph{maximal star} with vertex $f(x)$, which is defined as the set of all points $z\in F$ for which the line segment $[f(x), z]$ can be lifted to a path $\gamma$ in $D$ starting at $x$, and such that $f$ maps homeomorphically the image $Im (\gamma)$ onto the segment $[f(x), z]$. The following properties  are also obtained in \cite{John}:
\begin{itemize}
\item[(i)] (Star-shaped) $S_x$ is an open neighborhood of $f(x)$, which is star-shaped with vertex  $f(x)$, that is, for each $z\in S_x$ the whole segment $[{f(x),z}]$ is also contained in $S_x$.
\item[(ii)] (Maximality) $S_x$ is maximal in the sense that for every sequence $(z_n) \subset S_x$ that lies on the same ray from $f(x)$ and converges to a point $z\not\in S_x$,  the sequence $\{f_x^{-1}(z_n)\}$ does not converge in $D$.
    \item[(iii)](Monodromy) For each path $q$ contained in $D$ connecting $x$ with some point $y$, and such that $f(Im (q))$ is contained in $S_x$, we have that $f_x^{-1} (f(y))=y$.
\end{itemize}

Our next result is a nonsmooth version of \cite[Theorem 2.1]{HoJinTi}, and provides a slight improvement of the condition in  \cite[Theorem IIA]{John}.

\begin{thm}\label{main:theorem}
Let $E$ and $F$ be Banach spaces, let  $\B(x_0,\rho)$ be an open ball of $E$ and let $f: \B(x_0, \rho) \to F$ be a local homeomorphism. Suppose that
\begin{equation}\label{condicion1}
\inf_{\|x-x_0\| \leq r}(D_{x}^{-} f) > 0 \quad \text{for} \quad0\leq r<\rho,
\end{equation}
and there exists a Riemann-integrable function  $\eta:[0,\rho) \to (0,\infty)$ such that
\begin{equation}\label{condicion2}
0< \eta(t)\leq \inf_{\|x-x_0\|=t}D_{x}^{-}f \quad \text{for} \quad 0\leq t < \rho.
\end{equation}
Then the maximal star $S_{x_{0}}$ contains the open ball $\B(f(x_0), \sigma)$, where
\begin{equation}
\sigma = \int_{0}^{\rho} \eta(t)dt.
\end{equation}
\end{thm}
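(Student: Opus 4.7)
The plan is to argue by contradiction. Suppose there exists $z\in\B(f(x_0),\sigma)$ with $z\notin S_{x_0}$; I may assume $L:=\|z-f(x_0)\|\in(0,\sigma)$. Consider the affine path $p(t):=(1-t)f(x_0)+tz$ for $t\in[0,1]$. Since $f$ is a local homeomorphism on $\B(x_0,\rho)$, $p$ admits a unique continuous maximal lift $q:[0,b)\to \B(x_0,\rho)$ with $q(0)=x_0$ and $f\circ q=p|_{[0,b)}$, for some $b\in(0,1]$. Because $z\notin S_{x_0}$, this lift cannot be continuously completed to $[0,b]$ inside $\B(x_0,\rho)$; using the local-homeomorphism property of $f$, I will argue that every accumulation point of $q(t)$ as $t\to b^-$ must have distance exactly $\rho$ from $x_0$ (otherwise the local inverse of $f$ at such a point together with uniqueness of lifts would extend $q$). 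In particular, writing $r(t):=\|q(t)-x_0\|$, there will be a sequence $t_n\to b^-$ with $r(t_n)\to\rho$.

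The heart of the argument will be the estimate $\int_0^{r(t)}\eta(u)\,du\leq tL$ for every $t\in[0,b)$. To prove it, I will first exploit (\ref{condicion1}): for every $r_0<\rho$ the constant $m_{r_0}:=\inf_{\|x-x_0\|\leq r_0}D^-_xf$ is strictly positive, and Proposition \ref{MVI} applied on any subinterval $[t_1,t_2]$ of $[0,b)$ on which $q$ stays in $\overline{\B(x_0,r_0)}$ gives $\ell(p|_{[t_1,t_2]})=(t_2-t_1)L\geq m_{r_0}\|q(t_2)-q(t_1)\|$, so $q$, and hence $r$, will be locally Lipschitz on $[0,b)$. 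Next, for each $t\in[0,b)$ and small $h>0$, Proposition \ref{MVI} on $[t,t+h]$ combined with (\ref{condicion2}) will furnish some $\tau_h\in[t,t+h]$ with
$$
hL\;\geq\;D^-_{q(\tau_h)}f\cdot\|q(t+h)-q(t)\|\;\geq\;\eta(r(\tau_h))\cdot|r(t+h)-r(t)|.
$$
Setting $\Psi(s):=\int_0^s\eta(u)\,du$, I will compare this with the elementary bound $|\Psi(r(t+h))-\Psi(r(t))|\leq(\sup_{I_h}\eta)\cdot|r(t+h)-r(t)|$ on the interval $I_h$ joining $r(t)$ and $r(t+h)$, and exploit that $\eta$ is continuous at almost every point of $[0,\rho)$ (being Riemann integrable) to conclude $\limsup_{h\to 0^+}(\Psi(r(t+h))-\Psi(r(t)))/h\leq L$ for almost every $t\in[0,b)$. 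Since $\Psi\circ r$ is locally Lipschitz, hence absolutely continuous, integration then yields $\Psi(r(t))\leq tL$.

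To close the argument, along the sequence $t_n\to b^-$ with $r(t_n)\to\rho$, continuity of $\Psi$ and the identity $\sigma=\lim_{s\to\rho^-}\Psi(s)$ will give $\Psi(r(t_n))\to\sigma$; comparing with the key estimate produces $\sigma\leq bL\leq L<\sigma$, the required contradiction. Hence $\B(f(x_0),\sigma)\subset S_{x_0}$. The hardest part will be the middle paragraph: only Riemann integrability of $\eta$ is assumed, so its potentially dense discontinuities force almost-everywhere arguments, and the chain-rule-type identity for $\Psi\circ r$ has to be justified carefully using the local Lipschitz regularity of $r$ provided by (\ref{condicion1}).
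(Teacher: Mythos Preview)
Your overall architecture matches the paper's: argue by contradiction, lift the segment $[f(x_0),z]$ through the local homeomorphism, and use condition~(\ref{condicion1}) together with Proposition~\ref{MVI} to force the lift to escape toward the boundary sphere of radius $\rho$. The genuine difference is in how the key inequality $\int_0^\rho\eta\leq\|z-f(x_0)\|$ is extracted. The paper does it by \emph{Riemann sums}: for any partition $0=t_0<\cdots<t_n<\rho$, it picks parameters $\lambda_i$ with $\|f^{-1}(\lambda_i u)\|=t_i$, uses \cite[Lemma~2.1]{HoJinTi} to pass to subintervals on which $\|f^{-1}(\cdot\, u)\|$ is trapped in $[t_i,t_{i+1}]$, applies Proposition~\ref{MVI} on each, and obtains a tagged Riemann sum $\sum\eta(\xi_i)(t_{i+1}-t_i)$ with $\xi_i\in[t_i,t_{i+1}]$ bounded by $R<\sigma$. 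Riemann integrability of $\eta$ is then used in the most literal way. Your route instead bounds the upper derivative of $\Psi\circ r$ by $L$ almost everywhere and integrates; this is conceptually natural but, as you correctly flag, the step ``$\eta$ is continuous at $r(t)$ for a.e.\ $t$'' is delicate: the discontinuity set $D$ of $\eta$ is null, but $r^{-1}(D)$ need not be, and you must invoke (or reprove) the standard fact that for a locally Lipschitz real function $r$ one has $r'(t)=0$ for a.e.\ $t\in r^{-1}(D)$, then treat those $t$ separately. The paper's Riemann-sum argument sidesteps this entirely.

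One small gap to patch: in an infinite-dimensional Banach space, ``every accumulation point of $q(t)$ as $t\to b^-$ lies on the sphere of radius $\rho$'' does not by itself yield a sequence with $r(t_n)\to\rho$, since $q$ may have no accumulation points at all. What you should argue instead (and you already have the tool) is that if $\sup_{t<b}r(t)=r_0<\rho$, then your Lipschitz estimate with constant $L/m_{r_0}$ is \emph{uniform} on $[0,b)$, so $q$ extends continuously to $b$ with $q(b)\in\overline{\B}(x_0,r_0)\subset\B(x_0,\rho)$, contradicting maximality of the lift. This is exactly how the paper establishes $r=\rho$.
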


\begin{proof} By considering the mapping $g(x)= f(x+x_0)-f(x_0)$ we may assume, without loss of generality, that
$x_0= 0$ and $f(x_0)=0$. Suppose that the maximal star $S_0$ does not contain the ball $\B(0, \sigma)$. Then there exists a vector $w\in \B(0, \sigma)$ such that $w \notin S_0$. Consider $u=\frac{w}{\Vert w \Vert}$, and define
$$
R:= \sup \{ \lambda \geq 0 \, : \, \lambda u \in S_0 \}.
$$
Then
$$R= \Vert R u \Vert \leq \Vert w \Vert < \sigma.$$
Now, let
$$
r:= \sup \{ \Vert f^{-1}(\lambda u)\Vert \, : \, 0\leq \lambda < R\}.
$$
By construction, it is clear that $r \leq \rho$. We are going to see that, in fact,  $r= \rho$. Indeed, if $r<\rho$ we have that $m=\inf_{\|x\|\leq r}(D_{x}^{-}f)>0$  by condition (\ref{condicion1}). Now,  fixed $\lambda_1,\lambda_2 \in [0, R)$ with $\lambda_1 <\lambda_2$ consider the path $p(t) = t u$ defined for $t\in [\lambda_1,\lambda_2]$ and set $q= f^{-1}\circ p$. Applying  Proposition \ref{MVI}, we obtain that, for some $\tau \in [\lambda_1,\lambda_2]$,
$$
|\lambda_2-\lambda_1|\geq (D_{f^{-1} \, (\tau u)}^{-}f)\|f^{-1}(\lambda_1 u) - f^{-1}(\lambda_2u)\| \geq m \, \|f^{-1}(\lambda_1 u) - f^{-1}(\lambda_2u)\|.
$$
This implies that
$$
\lim_{\lambda\to R}f^{-1}(\lambda u)
$$
exists in the closed ball ${\overline \B}(x_0, r)$, and this contradicts the maximality of $S_0$. Hence, $r= \rho$.

Now, since the map  $\lambda \mapsto \|f^{-1}(\lambda u)\|$ is continuous, it assumes all values between $0$ and $\rho$. Then,  for any sequence of values $0=t_0 < t_1 < \cdots < t_n < \rho$ there exist  $\lambda_1,\cdots, \lambda_n \in [0, R)$ such that $t_i=\|f^{-1}(\lambda_{i}u)\|$ for $i=1,\cdots,n$. For each fixed  $i\in \{1,\cdots,n-1\}$, suppose  that  $\lambda_i<\lambda_{i+1}$,  and  applying \cite[Lemma 2.1]{HoJinTi} we can find  $\lambda_{i}^{'} , \lambda_{i+1}^{'}\in [\lambda_i, \lambda_{i+1}]$ with $\lambda_i' <  \lambda_{i+1}'$ such that
\begin{equation*}
t_i = \|f^{-1}(\lambda_i u)\|=\|f^{-1}(\lambda_{i}^{'}u)\|\leq \|f^{-1}(\lambda u)\|
\leq \|f^{-1}(\lambda_{i+1}^{'} u)\|= \|f^{-1}(\lambda_{i+1}u)\| =t_{i+1}
\end{equation*}
for every $\lambda \in [\lambda_{i}^{'}, \lambda_{i+1}^{'}]$. Using again Proposition \ref{MVI}, there exists
$\tau_{i} \in [\lambda_{i}^{'},\lambda_{i+1}^{'}]$ such that
$$
(\lambda_{i+1} - \lambda_{i})\geq (\lambda_{i+1}^{'} - \lambda_{i}^{'})\geq(D_{f^{-1}(\tau_{i}u)}^{-}f)\|f^{-1}(\lambda_{i+1}^{'}u) - f^{-1}(\lambda_{i}^{'} u)\|.
$$
Hence,
\begin{align*}
t_{i+1}-t_i  &=
 \|f^{-1}(\lambda_{i+1}^{'}u)\|-\|f^{-1}(\lambda_{i}^{'}u)\|\\
 &\leq \|f^{-1}(\lambda_{i+1}^{'}u) - f^{-1}(\lambda_{i}^{'} u)\| \\
& \leq (D_{f^{-1}(\tau_{i}u)}^{-}f)^{-1}(\lambda_{i+1} - \lambda_{i}).
\end{align*}
Since  $\|f^{-1}(\tau_{i}u)\|  \in [t_i, t_{i+1}]$ for each $i\in \{1,\cdots,n-1\}$, we deduce from the condition (\ref{condicion2}) that
\begin{align*}
\sum_{i=0}^{n-1}\eta(\|f^{-1}(\tau_{i}u)\| )(t_{i+1} -t_{i}) &
\leq \sum_{i=0}^{n-1}(D_{f^{-1}(\tau_{i}u)}^{-}f)  (t_{i+1}-t_i) \leq \sum_{i=0}^{n-1}(\lambda_{i+1}-\lambda_{i}) = \lambda_{n} < R.
\end{align*}
As a consequence, we have that
$$
\sigma= \int_{0}^{\rho}\eta(t)dt \le R,
$$
which is a contradiction
\end{proof}

As a consequence of  Theorem \ref{main:theorem} above, we obtain a sufficient condition for global inversion by means of an integral condition in terms of the lower scalar  Dini derivative.

\begin{cor}\label{corol}
Let $E$ and $F$ be Banach spaces and  $f: E\to F$  a local homeomorphism such that
\begin{equation}\label{condicion4}
\inf_{\|x\| \leq r}(D_{x}^{-} f)>0 \quad \text{for} \quad0\leq r<\infty.
\end{equation}
Suppose that there exists a Riemann-integrable function  $\eta:[0,\infty) \to (0,\infty)$ such that
\begin{equation}\label{condicion5}
\int_{0}^{\infty}\eta(t)dt = \infty \qquad \text{and} \qquad 0<\eta(t)\leq \inf_{\|x\|=t}D_{x}^{-}f \quad \text{for} \quad 0\leq t < \infty.
\end{equation}
Then $f$ is a global homeomorphism from $E$ onto $F$. Moreover, for each $x\in E$,
\begin{equation}\label{desigualdad}
\|f(x)-f(0)\|\geq \int^{\|x\|}_{0}\eta(t)dt.
\end{equation}
\end{cor}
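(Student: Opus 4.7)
\textbf{Proof plan for Corollary \ref{corol}.} My strategy is to extract both assertions from Theorem \ref{main:theorem} applied at the center $x_0=0$, together with the Riemann-sum bookkeeping that already drives its proof.

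For the global inversion statement, I would apply Theorem \ref{main:theorem} to the restriction of $f$ to each open ball $\B(0,\rho)$; hypotheses (\ref{condicion4}) and (\ref{condicion5}) immediately yield the two assumptions (\ref{condicion1}) and (\ref{condicion2}) required there, so the maximal star $S_0$ contains $\B(f(0),\int_0^\rho\eta(t)\,dt)$. Letting $\rho\to\infty$ and using $\int_0^\infty\eta(t)\,dt=\infty$ forces $S_0=F$; in particular the continuous map $g:=f_0^{-1}:F\to E$ is a right inverse of $f$, giving surjectivity. For injectivity I would invoke the monodromy property (iii) of the maximal star: given $x\in E$, the segment $q(t)=tx$ is a path in $E$ from $0$ to $x$, the inclusion $f(\mathrm{Im}\,q)\subset F=S_0$ is automatic, and (iii) yields $g(f(x))=x$. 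Hence $f$ is a global homeomorphism from $E$ onto $F$.

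For the integral inequality (\ref{desigualdad}), translate so that $f(0)=0$, fix $x\in E$ with $\rho:=\|x\|>0$, and set $y:=f(x)$, $u:=y/\|y\|$. Global invertibility lets me lift the straight ray $p(\lambda)=\lambda u$ to $q(\lambda):=f^{-1}(\lambda u)$ for $\lambda\in[0,\|y\|]$, with $q(0)=0$ and $q(\|y\|)=x$, so the continuous scalar function $\lambda\mapsto\|q(\lambda)\|$ attains every value in $[0,\rho]$. I would then replay the partition argument from Theorem \ref{main:theorem}: for each $\rho'<\rho$ and partition $0=t_0<\cdots<t_n=\rho'$, use \cite[Lemma 2.1]{HoJinTi} to extract $0\le\lambda_0\le\cdots\le\lambda_n\le\|y\|$ with $\|q(\lambda_i)\|=t_i$ on whose successive sub-intervals $\|q(\cdot)\|$ is monotone, then apply Proposition \ref{MVI} piecewise together with the reverse triangle inequality to obtain $\lambda_{i+1}-\lambda_i \ge \eta(\|q(\tau_i)\|)(t_{i+1}-t_i)$ for some $\tau_i$ with $\|q(\tau_i)\|\in[t_i,t_{i+1}]$. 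Summing and using $\lambda_n\le\|y\|$ yields the Riemann-sum inequality $\|y\|\ge\sum_i\eta(\|q(\tau_i)\|)(t_{i+1}-t_i)$; taking the mesh to zero and then $\rho'\to\rho^-$ produces (\ref{desigualdad}).

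The only genuine technical wrinkle, rather than a deep obstacle, is the possible non-monotonicity of $\lambda\mapsto\|q(\lambda)\|$: even though $q$ starts at $0$ and ends at a point of norm $\rho$, the lifted path may oscillate in distance from the origin. This is exactly what forces the use of the selection lemma \cite[Lemma 2.1]{HoJinTi}; without it the Riemann-sum rearrangement does not line up. Everything else is a direct reuse of the machinery already developed for Theorem \ref{main:theorem}, so I expect the final write-up to be short.
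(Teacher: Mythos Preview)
Your plan for the global-homeomorphism part matches the paper's proof exactly: apply Theorem~\ref{main:theorem} on balls $\B(0,\rho)$, let $\rho\to\infty$ to get $S_0=F$ and hence surjectivity, and use the monodromy property~(iii) for injectivity.

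For the inequality~(\ref{desigualdad}), your approach is correct but different from the paper's. You propose to re-run the entire Riemann-sum and selection-lemma machinery from the proof of Theorem~\ref{main:theorem}, now along the globally defined lift $q(\lambda)=f^{-1}(\lambda u)$ on $[0,\|y\|]$. The paper instead gives a two-line contradiction: set $\rho=\|x\|$, $\sigma=\int_0^{\rho}\eta(t)\,dt$, and suppose $\|f(x)-f(0)\|<\sigma$; applying Theorem~\ref{main:theorem} to $f|_{\B(0,\rho)}$ puts $f(x)$ inside the maximal star of that restriction, and monodromy then forces the preimage of $f(x)$ under the local inverse to lie in the open ball $\B(0,\rho)$, contradicting $\|x\|=\rho$ (global injectivity, already established, identifies this preimage with $x$). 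The paper's route is considerably shorter because it black-boxes Theorem~\ref{main:theorem} rather than reopening its proof; your route has the minor advantage of being self-contained once global invertibility is in hand, but at the cost of repeating the delicate handling of the non-monotone norm $\lambda\mapsto\|q(\lambda)\|$ via \cite[Lemma~2.1]{HoJinTi}. Either way the result goes through; you may want to adopt the contradiction argument for economy.
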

\begin{proof}
By (\ref{condicion5}) and Theorem \ref{main:theorem} we have that  every open ball  centered at $f(0)$ in $F$ is contained in the maximal star $S_0$. Therefore $S_0$ is the whole space $F$, and  $f$ maps $E$ onto $F$. On the other hand, from the monodromy  property $(iii)$ of the maximal star we obtain that  $f$ is one to one. Thus, $f$ is a global homeomorphism between $E$ and $F$.

Now, we will prove (\ref{desigualdad}). Let $x\in E$ nonzero,  consider  $\rho= \Vert x \Vert>0$ and  $\sigma = \int_{0}^{\rho}\eta(t)dt$, and suppose that $\|f(x)-f(0)\| < \sigma$. Applying Theorem \ref{main:theorem} to the restriction of $f$ to the open ball $\B(0,\rho)$, we obtain that $f(x)$ belongs to the corresponding maximal star $S_0$, and then, by the monodromy  property $(iii)$, we have that $x=f^{-1}(f(x))$ belongs to the open ball $\B(0,\rho)$, which is a contradiction. This completes the proof.
\end{proof}

In the next result we give an estimate for the domain of invertibility of a continuous mapping $f: \R^n\to\R^n$ which is a local homeomorphism around a point $x$, in terms of the regularity index of $f$ associated to a pseudo-Jacobian map.

\begin{cor}\label{estimate}
Let $f: \R^n\to\R^n$ be a continuous mapping and let $Jf$ be a usc pseudo-Jacobian map of $f$.  Let $x_0\in E$ and  $\rho>0$,  and suppose that  there exists a
Riemann-integrable function  $\eta:[0,\rho) \to (0,\infty)$ such that
$$
0<\eta(t)\leq\inf_{\|x-x_0\|=t}\alpha_{Jf}(x) \quad \text{for} \quad 0\leq t < \rho.
$$
Then $f(x_0 + \rho \B_n) \supset f(x_0)+ \sigma \B_n$ and $f$ admits a local inverse defined in the open ball $f(x_0) + \sigma \B_n$, where
$$
\sigma = \int_{0}^{\rho} \eta(t)dt.
$$
\end{cor}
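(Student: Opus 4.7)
The plan is to reduce the statement to Theorem \ref{main:theorem}, transferring the regularity index hypothesis into a lower scalar derivative hypothesis via Theorem \ref{DerEsc1}. Throughout, I work on the open ball $\B(x_0,\rho)$.

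First I would check that the hypotheses of Theorem \ref{main:theorem} are in force on $\B(x_0,\rho)$. Since $\eta$ is strictly positive, the assumption $\eta(t)\le \inf_{\|x-x_0\|=t}\alpha_{Jf}(x)$ forces $\alpha_{Jf}(x)>0$ for every $x\in \B(x_0,\rho)$, i.e.\ $f$ is $Jf$-regular on $\B(x_0,\rho)$. Combined with the usc hypothesis on $Jf$, Theorem \ref{TFI} gives that $f$ is a local homeomorphism on $\B(x_0,\rho)$, and Theorem \ref{DerEsc1} yields the pointwise bound
\[
D_{x}^{-} f \;\ge\; \alpha_{Jf}(x) \;\ge\; \eta(\|x-x_0\|)\qquad \text{for every } x\in \B(x_0,\rho).
\]
This immediately gives condition \eqref{condicion2} in Theorem \ref{main:theorem}, with the same $\eta$.

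Next I would verify the uniform positivity condition \eqref{condicion1}, namely $\inf_{\|x-x_0\|\le r}(D_{x}^{-} f)>0$ for each $0\le r<\rho$. By the previous paragraph it suffices to bound $\alpha_{Jf}$ from below on the compact set $\overline{\B}(x_0,r)$. This is where Lemma \ref{conbeta} enters: for every $x$ with $\alpha_{Jf}(x)>0$ one can find $\beta=\beta(x)>0$ with $\alpha_{Jf}(x,\beta)>\tfrac{1}{2}\alpha_{Jf}(x)$, and by the very definition of $\alpha_{Jf}(x,\beta)$ this number is a lower bound for $\alpha_{Jf}(y)$ at every $y\in x+\beta\B_n$. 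Thus $\alpha_{Jf}$ is lower semicontinuous on $\B(x_0,\rho)$; a standard open-cover/compactness argument on $\overline{\B}(x_0,r)$ then delivers a strictly positive uniform lower bound, yielding \eqref{condicion1}.

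With both conditions \eqref{condicion1} and \eqref{condicion2} established for the restriction of $f$ to $\B(x_0,\rho)$, Theorem \ref{main:theorem} applies and produces the maximal star $S_{x_0}\supset f(x_0)+\sigma\B_n$, where $\sigma=\int_0^\rho\eta(t)\,dt$. By construction, the local inverse $f_{x_0}^{-1}$ is defined on $S_{x_0}$ and takes values in $\B(x_0,\rho)$ (since every segment from $f(x_0)$ to a point of $S_{x_0}$ is, by definition, the $f$-image of a path in $\B(x_0,\rho)$ emanating from $x_0$). Restricting $f_{x_0}^{-1}$ to $f(x_0)+\sigma\B_n$ gives the required local inverse on that open ball, and consequently $f(x_0+\rho\B_n)\supset f(x_0)+\sigma\B_n$.

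The one step that requires real attention is the uniform lower bound for $\alpha_{Jf}$ on closed balls $\overline{\B}(x_0,r)\subset \B(x_0,\rho)$; everything else is essentially a bookkeeping exercise of translating the pseudo-Jacobian hypotheses into the analytic hypotheses demanded by Theorem \ref{main:theorem}. The lower semicontinuity obtained via Lemma \ref{conbeta} is the crucial ingredient making that step work.
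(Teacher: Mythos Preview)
Your proposal is correct and follows essentially the same approach as the paper: reduce to Theorem \ref{main:theorem} by using Theorem \ref{TFI} for the local homeomorphism property, Theorem \ref{DerEsc1} for condition \eqref{condicion2}, and Lemma \ref{conbeta} plus compactness of $\overline{\B}(x_0,r)$ for condition \eqref{condicion1}. Your write-up is in fact more detailed than the paper's, which leaves the lower-semicontinuity-via-$\alpha_{Jf}(x,\beta)$ step implicit.
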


\begin{proof}
By  Theorem \ref{TFI} and Theorem \ref{DerEsc1}, for applying Theorem \ref{main:theorem}  it only  remains  to show that condition (\ref{condicion1}) holds. Fix $0<r<\rho$. For each $x$ in the closed ball $x_0 + r {\overline \B}_n$, by Lemma \ref{conbeta} there exists $\beta_x>0$ such that $\alpha_{Jf}(x,\beta_x)>0$. The desired result follows by the compactness of $x_0 + r {\overline \B}_n$ and using again Theorem \ref{DerEsc1}.
\end{proof}

Finally, we obtain a version of the Hadamard integral condition for  a continuous mapping,  in terms of the regularity index associated to a pseudo-Jacobian map.

\begin{cor}\label{Hada}
Let $f: \R^n\to\R^n$ be a continuous mapping and let $Jf$ be a usc pseudo-Jacobian map of $f$. Suppose that   there exists a Riemann-integrable function  $\eta:[0,\infty) \to (0,\infty)$ such that
$$
\int_{0}^{\infty}\eta(t)dt = \infty \qquad \text{and} \qquad 0<\eta(t)\leq \inf_{\|x\|=t} \alpha_{Jf}(x) \quad \text{for} \quad 0\leq t < \infty.
$$
Then $f$ is a global homeomorphism. Moreover, for each $x\in E$,
$$
\|f(x)-f(0)\|\geq \int^{\|x\|}_{0}\eta(t)dt.
$$
\end{cor}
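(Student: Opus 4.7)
The plan is to reduce Corollary \ref{Hada} to Corollary \ref{corol} by using Theorem \ref{DerEsc1} to pass from the regularity index $\alpha_{Jf}(x)$ to the lower scalar Dini derivative $D^-_x f$. Since the hypothesis $\eta(t)\le \inf_{\|x\|=t}\alpha_{Jf}(x)$ with $\eta>0$ guarantees $\alpha_{Jf}(x)>0$ for every $x\in\R^n$, the mapping $f$ is $Jf$-regular everywhere. Together with the usc assumption on $Jf$, Theorem \ref{TFI} shows that $f$ is a local homeomorphism on $\R^n$, which is the first hypothesis required by Corollary \ref{corol}.

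Next, I would verify the pointwise comparison needed for (\ref{condicion5}). By Theorem \ref{DerEsc1}, for every $x\in\R^n$ we have $D^-_x f \ge \alpha_{Jf}(x)$, and so
\begin{equation*}
\eta(t) \le \inf_{\|x\|=t}\alpha_{Jf}(x) \le \inf_{\|x\|=t} D^-_x f \quad\text{for all } 0\le t<\infty,
\end{equation*}
which gives the second half of (\ref{condicion5}); the divergence of $\int_0^\infty \eta(t)\,dt$ is given by hypothesis.

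The step that needs most attention, as it did in the proof of Corollary \ref{estimate}, is verifying condition (\ref{condicion4}), namely that $\inf_{\|x\|\le r}(D^-_x f)>0$ for each $0\le r<\infty$. Fixing such an $r$, for every $x$ in the closed ball $r\overline{\B}_n$ Lemma \ref{conbeta} produces a radius $\beta_x>0$ with $\alpha_{Jf}(x,\beta_x)>\tfrac{1}{2}\alpha_{Jf}(x)>0$. The open balls $x+\beta_x \B_n$ form an open cover of the compact set $r\overline{\B}_n$; extracting a finite subcover and using Lemma \ref{beta} on each ball (exactly the argument in Theorem \ref{DerEsc1}) yields a uniform positive lower bound on $D^-_x f$ over $r\overline{\B}_n$.

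Once (\ref{condicion4}) and (\ref{condicion5}) are both established, Corollary \ref{corol} applies directly, with $E=F=\R^n$, and gives both that $f$ is a global homeomorphism and that $\|f(x)-f(0)\|\ge \int_0^{\|x\|}\eta(t)\,dt$ for every $x\in\R^n$. The main obstacle is the uniform positivity of $D^-_x f$ on bounded sets, since the regularity index itself need not be uniformly bounded below on balls; everything else is an essentially mechanical combination of the preceding results.
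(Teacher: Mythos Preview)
Your proposal is correct and follows essentially the same approach as the paper: reduce to Corollary \ref{corol} by invoking Theorem \ref{TFI} for the local homeomorphism, Theorem \ref{DerEsc1} for the comparison $\alpha_{Jf}(x)\le D^-_x f$, and the compactness argument of Corollary \ref{estimate} (via Lemma \ref{conbeta}) to establish condition (\ref{condicion4}). The paper's own proof simply says the result ``can be derived from Corollary \ref{corol} following the lines of Corollary \ref{estimate},'' which is exactly what you have spelled out.
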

\begin{proof}
The result can be derived from Corollary \ref{corol}   following the lines of  Corollary \ref{estimate} above.
\end{proof}


\end{document}